\theoremstyle{plain}
\newtheorem{theorem}{\bf Theorem}[section]
\newtheorem{proposition}[theorem]{\bf Proposition}
\newtheorem{lemma}[theorem]{\bf Lemma}
\theoremstyle{definition}
\newtheorem{remark}[theorem]{\bf Remark}
 \DeclareMathOperator{\ord}{ord}
\numberwithin{equation}{section}
\begin{document}

\title[Zero-sum Invariants of  Finite Abelian
Groups ]{Zero-sum Invariants of  Finite Abelian
Groups }

\address{Center for Combinatorics and LPMC-TJKLC, Nankai University, Tianjin,
300071, P.R. China} \email{wdgao@nankai.edu.cn,
}

\address{Department of Mathematics, Brock University, St. Catharines, Ontario,
Canada L2S 3A1} \email{yli@brocku.ca}

\address{College of Science, Civil Aviation University of China, Tianjin, 300300, P.R. China} \email{jtpeng1982@aliyun.com}

\address{Department of Mathematics, Tianjin Polytechnic University, Tianjin, 300387, P. R. China} \email{gqwang1979@aliyun.com}

\author{Weidong Gao, YuanLin Li, Jiangtao Peng and Guoqing Wang}


\keywords{zero-sum sequence}

\subjclass[2010]{11R27, 11B30, 11P70, 20K01}

\begin{abstract}
The purpose of the article is to provide  an unified way to formulate zero-sum invariants. Let $G$ be
a finite additive abelian  group. Let $\mathcal{B} (G)$ denote the
set consisting of all nonempty zero-sum sequences over $G$. For
$\Omega \subset \mathcal{B} (G)$, let $d_{\Omega}(G)$  be the
smallest integer $t$ such that every sequence $S$ over $G$ of length
$|S|\geq t$ has a subsequence in $\Omega$.  We provide some first
results and open problems on  $d_{\Omega}(G)$.
\end{abstract}

\maketitle


\bigskip

\bigskip
\section{Introduction}
\bigskip

Zero-sum theory on abelian groups can be traced back to 1960's and
has been developed rapidly in recent three decades (see
\cite{Ga-Ge06b}, \cite{Ge09a} and \cite{Ge-HK06a}). Many invariants
have been formulated in zero-sum theory and we list some of these
invariants here. Let $G$ be an additive finite abelian group. The
EGZ-constant $\mathsf s(G)$ is the smallest integer $t$ such that
every sequence $S$ over $G$ of length $|S|\geq t$ has a zero-sum
subsequence of length $\exp (G)$. This invariant $\mathsf s(G)$
comes from the so called Erd\H{o}s-Ginzburg-Ziv theorem, which
asserts that $\mathsf s(G) \leq 2|G|-1$ and is regarded as one of
two starting points of zero-sum theory. Another starting point of
zero-sum theory involves the Davenport constant $\mathsf D(G)$,
which is defined as the smallest integer $t$ such that every
sequence $S$ over $G$ of length $|S|\geq t$ has a nonempty zero-sum
subsequence. This invariant $\mathsf D(G)$ was formulated by H.
Davenport in 1965 and his motivation was to study algebraic number
theory. Let $\eta (G)$ be the smallest integer $t$ such that every
sequence $S$ over $G$ of length $|S|\geq t$
 has a zero-sum subsequence of length between 1 and $\exp (G)$, which was first introduced by Emade Boas in 1969 to study $\mathsf D(G)$. Let $\mathsf E(G)$ be the smallest integer $t$ such that every sequence $S$ over $G$ of
 length $|S|\geq t$ has
 a zero-sum subsequence of length $|G|$, which was formulated by the first author in 1996.
  The above invariants on zero-sum sequences have been studied by many authors. To give a
 unified way of
formulating zero-sum invariants, recently,
  Geroldinger, Grynkiewicz and Schmid \cite{GGS} introduced the following concept.  Let $\mathcal L$ be a finite set of positive integers, and let $\mathbf d_{\mathcal L}(G)$ be the smallest integer $t$ such that every
  sequence $S$ over $G$ of length $|S|\geq t$ has a zero-sum subsequence of length in $\mathcal L$. Here we give a more unified way to formulate zero-sum
  invariants.

\medskip
\noindent
 Let $\mathcal B (G)$ denote the
set consisting of all nonempty zero-sum sequences over $G$. For $\Omega \subset \mathcal B (G)$, define $d_{\Omega}(G)$ to be the smallest integer $t$ such that every sequence $S$ over $G$ of length $|S|\geq t$ has a subsequence in $\Omega$. If such $t$ does not exist, we let $d_{\Omega}(G)=\infty$. Now we have
\[
d_{\Omega}(G)=\left \{ \begin{array}{ll}  \mathsf{D}(G), \mbox{ if } \Omega = \mathcal B(G) ; \\
 \eta (G), \mbox{ if } \Omega = \{ S\in \mathcal B(G): 1\leq |S| \leq \exp(G)\}; \\  \mathsf{s}(G), \mbox{ if } \Omega = \{ S\in \mathcal B(G): |S| = \exp(G)\}; \\  \mathsf{E}(G), \mbox{ if } \Omega = \{ S\in \mathcal B(G):  |S|=|G|)\};   \\  \mathbf d_{\mathcal L} (G), \mbox{ if } \Omega =\{S \in \mathcal B(G): |S|\in \mathcal L \}. \end{array} \right.
\]

In this paper, we will provide some first results on
$d_{\Omega}(G)$ and formulate some open problems. The rest of this
paper is organized as follows. In Section 2, we present some
necessary concepts and terminology. In Section 3 we study the way to
present a given integer $t\geq \mathsf D(G)$ by $\Omega \subset
\mathcal B(G)$ with $d_{\Omega}(G)=t$; In the final section we study
the minimal $\Omega \subset \mathcal B(G)$ with respect to
$d_{\Omega}(G)=t$.

\section{Preliminaries}

Let $\mathbb{N}$ denote the set of positive integers,
$\mathbb{N}_0=\mathbb{N}\cup\{0\}$. For a real number $x$, we denote
by $\lfloor x\rfloor$ the largest integer that is less than or equal
to $x$.

Throughout, all abelian groups will be written additively. By the
Fundamental Theorem of Finite Abelian Groups we have
$$G\cong C_{n_1}\oplus\cdots\oplus C_{n_r},$$
where $r=\mathsf r(G)\in \mathbb{N}_0$ is the rank of $G$,
$n_1,\ldots,n_r\in\mathbb{N}$ are integers with $1<n_1|\ldots|n_r$,
moreover, $n_1,\ldots,n_r$ are uniquely determined by $G$, and
$n_r=\text{exp}(G)$ is the $exponent$ of $G$.  Set
$$
M(G)=1+\sum_{i=1}^r(n_i-1).
$$

Let $G_0\subset G$.  For $g_1,\ldots,g_l\in G_0$(repetition
allowed), we call $S=g_1\cdot \ldots \cdot g_l$ a $sequence$ over
$G_0$. We write sequences $S$ in the form
$$S=\prod_{g\in G}g^{\textsf{v}_g(S)}\text{ with }\textsf{v}_g(S)\in \mathbb{N}_0\text{ for all }g\in G_0.$$
We call $\textsf{v}_g(S)$ the $multiplicity$ of $g$ in $S$.

For $S=g_1\cdot \ldots \cdot g_l=\prod_{g\in
G_0}g^{\textsf{v}_g(S)}$, we call
\begin{itemize}
\item $|S|=l=\sum_{g\in G_0}\textsf{v}_g(S)\in \mathbb{N}_0$ the $length$ of $S$.

\item $\sigma(S)=\sum\limits_{i=1}\limits^{l}g_i=\sum_{g\in G_0}\textsf{v}_g(S)g\in G_0$ the $sum$ of $S$.

\item $S$ is a $zero$-$sum$ $sequence$ if $\sigma(S)=0$.

\item $S$ is a $short$ $zero$-$sum$ $sequence$ if it is a zero-sum sequence of length $|S|\in[1,\text{exp}(G)]$

\end{itemize}

 We denote by $\mathcal B (G_0)$ the set  of all nonempty zero-sum sequences over $G_0$,   by  $\mathcal A(G_0)$  the set of all minimal
zero-sum sequences over $G_0$.

We say two sequences $S$ and $T$ over $G$ have the same form if and
only if $\textsf{v}_g(S)=\textsf{v}_g(T)$ for every $g\in G$.

\section{Representing the same invariant $t$ by different $\Omega$
with $d_{\Omega}(G)=t$}

We first state some basic properties on $d_{\Omega}(G)$.

\medskip
\begin{proposition} \label{prop2.2}  Let $\Omega,\Omega' \subset \mathcal B(G)$.
\begin{enumerate}

\item $d_{\Omega}(G)< \infty$ if and only if for every element $g\in G$, $g^{k\ord(g)} \in \Omega$ for some positive integer $k=k(g)$

 \item
 If $\Omega \subset \Omega'$ then $d_{\Omega'}(G)\leq d_{\Omega}(G).$

 \item If $S_1$ is a proper subsequence of $S$ and both $S$ and $S_1$
 belong to $\Omega$, then $d_{\Omega \setminus \{S_1\}}(G)=
 d_{\Omega}(G)$.

\item For every positive integer $t \in [\mathsf D(G), \infty ),$ there is an $\Omega \subset \mathcal B(G)$ such that $t=d_{\Omega}(G).$

\end{enumerate}

\end{proposition}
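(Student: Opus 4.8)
The plan is to realize each integer $t \ge \mathsf{D}(G)$ by modifying the set $\mathcal{A}(G)$ of minimal zero-sum sequences. Since every nonempty zero-sum sequence contains a minimal zero-sum subsequence, parts (2) and (3) give $d_{\mathcal{A}(G)}(G) = d_{\mathcal{B}(G)}(G) = \mathsf{D}(G)$; this settles the base case $t = \mathsf{D}(G)$ and suggests using $\mathcal{A}(G)$ as a scaffold. The key observation is that the neutral element supplies arbitrarily long padding: $0^m$ is a zero-sum sequence for every $m \ge 1$, whereas $0^1$ is the only minimal zero-sum sequence containing a zero (any minimal zero-sum sequence other than $0^1$ consists entirely of nonzero terms, since $0$ alone is already a zero-sum sequence).

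Concretely, for $t \ge \mathsf{D}(G)$ I would set $m = t - \mathsf{D}(G) + 1 \ge 1$ and define
\[
\Omega = \bigl(\mathcal{A}(G) \setminus \{0^1\}\bigr) \cup \{0^m\}.
\]
First I would check finiteness via part (1): for $g \ne 0$ the sequence $g^{\ord(g)}$ is a nonzero minimal zero-sum sequence, hence lies in $\mathcal{A}(G) \setminus \{0^1\} \subset \Omega$, while for $g = 0$ we have $0^m \in \Omega$ and $\ord(0) = 1$. Next, for an arbitrary sequence $S$ over $G$ I would split it as $S = S^{*} \cdot 0^{\mathsf{v}_0(S)}$, where $S^{*}$ collects the nonzero terms, and record the dichotomy that $S$ has no subsequence in $\Omega$ precisely when $S^{*}$ is zero-sum-free and $\mathsf{v}_0(S) \le m - 1$.

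From this dichotomy the two required bounds follow. For the upper bound, if $|S| \ge \mathsf{D}(G) + m - 1$ then either $\mathsf{v}_0(S) \ge m$, or else $|S^{*}| = |S| - \mathsf{v}_0(S) \ge \mathsf{D}(G)$, so that $S^{*}$ contains a nonempty zero-sum subsequence and hence a nonzero minimal zero-sum subsequence; in either case $S$ meets $\Omega$. For the lower bound I would take a zero-sum-free sequence $U$ over $G$ of maximal length $\mathsf{D}(G) - 1$ (necessarily free of zeros) and form $S = U \cdot 0^{m-1}$ of length $\mathsf{D}(G) + m - 2$, which by the dichotomy has no subsequence in $\Omega$. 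Combining the bounds gives $d_{\Omega}(G) = \mathsf{D}(G) + m - 1 = t$.

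The argument is essentially complete once the dichotomy is established, so there is no serious obstacle; the one point requiring care is the observation that deleting $0^1$ from $\mathcal{A}(G)$ removes exactly the ability to detect zeros, thereby decoupling the multiplicity of $0$ from the Davenport-type behaviour on $G \setminus \{0\}$ and letting the parameter $m$ tune the invariant freely.
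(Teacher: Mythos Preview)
Your argument for part (4) is correct and follows essentially the same idea as the paper: split off the zero element and pad with $0^{m}$ where $m = t - \mathsf{D}(G) + 1$. The only cosmetic difference is that you pair the padding with $\mathcal{A}(G)\setminus\{0^1\} = \mathcal{A}(G^*)$, whereas the paper uses the larger set $\mathcal{B}(G^*)$; since every nonempty zero-sum sequence over $G^*$ contains a minimal one, the resulting invariant is the same, and your dichotomy and two bounds are exactly the verification the paper leaves implicit in its ``it is easy to see.'' (Parts (1)--(3) you invoke rather than prove, but the paper also dispatches them in a line each, so nothing is missing there.)
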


\begin{proof}

1. Necessity. Let $t=d_{\Omega}(G)< \infty$. For every $g\in G$, the
sequence $g^t$ has a nonempty zero-sum subsequence $S$ in $\Omega$.
Since $S$ is zero-sum, we infer that it has the form $g^{k\ord(g)}$
for some positive integer $k$. This proves the necessity.

Sufficiency. If for every $g\in G$ there is a positive integer $k_g$
such that $g^{k_g\ord (g)}\in \Omega$, then clearly
$d_{\Omega}(G)\leq 1+\sum_{g\in G}(k_g\ord (g)-1)$ completing the
proof of Conclusion 1.

\medskip
2. The result holds obviously.

\medskip
3. The result follows from 2.

\medskip
4. Let $G^*=G\setminus \{0\}$. For each positive integer $t\geq
\mathsf D(G)$.   Let $\Omega=\{0^{t-\mathsf D(G)+1}\} \cup \mathcal
B(G^*)$. It is easy to see that $t=d_{\Omega}(G)$, completing the
proof.
\end{proof}

\bigskip
 Although Conclusion 4 of the above proposition asserts that for every positive $t \geq  \mathsf D(G)$, there is an $\Omega \subset \mathcal B(G)$ such that $t=d_{\Omega}(G)$, it does not give us much information on the invariant $t$.
 For some classical invariants $t$, we hope to
find some special $\Omega \in \mathcal B(G)$ with $d_{\Omega} (G)=t$
to help us understand $t$ better. We say a sequence $S$ over $G$ a
{\sl weak-regular} sequence if $\mathsf{v}_g(S)\leq \ord (g)$ for every $g\in
G$. We say $\Omega \subset \mathcal B(G)$ is {\sl weak-regular} if
very sequence $S\in \Omega$ is weak-regular.  Define $Vol(G)$ to be
the set of all positive integer $t\in [\mathsf D(G), 1+\sum_{g\in G}
(\ord(g)-1)]$ such that $t=d_{\Omega}(G)$ for some weak-regular
$\Omega \subset \mathcal B(G)$.

\begin{remark} A sequence over $G$ is called a regular sequence if $|S_H|\leq
|H|-1$ for every proper subgroup $H$ of $G$, where $S_H$ denote the
subsequence consisting of all terms of $S$ in $H$. The concept of
regular sequences was introduced by Gao, Han and Zhang \cite{GHZ}
quite recently.
\end{remark}

\medskip
{\bf Question 1.} Does $Vol (G)=[\mathsf D(G), 1+\sum_{g\in G} (\ord(g)-1)]$ hold for any finite abelian group $G$?

\medskip
Let $\Omega \subset \mathcal B(G)$, we say a sequence $S$ over $G$
is $\Omega$-{\sl free} if $S$ has no subsequence in $\Omega$.

\begin{lemma} \label{lemma2.2} Let $\Omega \in \mathcal B(G)$ with $d_{\Omega}(G)=t
\in [\mathsf D(G), 1+\sum_{g\in G} (\ord(g)-1)]$. If there is an
 $\Omega$-free sequence $T$ over $G$ of length $|T|=t-1$ such that
 $\mathsf v_g(T)\leq \ord(g)-1$ for every $g\in G$, then $d_{\Omega}(G)=t \in Vol (G)$.
\end{lemma}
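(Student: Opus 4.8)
The plan is to exhibit an explicit weak-regular family $\Omega'$ with $d_{\Omega'}(G)=t$; since $t$ already lies in the interval $[\mathsf D(G),1+\sum_{g\in G}(\ord(g)-1)]$ by hypothesis, producing such an $\Omega'$ is exactly what is needed to conclude $t\in Vol(G)$. The natural candidate, built directly from the given data, is
\[
\Omega'=\{\,S\in\mathcal B(G)\DP S\text{ is weak-regular and }S\text{ is not a subsequence of }T\,\}.
\]
By construction every member of $\Omega'$ is weak-regular, so $\Omega'$ is a weak-regular subset of $\mathcal B(G)$, and it remains only to establish the two bounds $d_{\Omega'}(G)\geq t$ and $d_{\Omega'}(G)\leq t$.

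The lower bound is immediate from the defining property of $T$: no subsequence of $T$ can belong to $\Omega'$, since each such subsequence is tautologically a subsequence of $T$. Hence $T$ is $\Omega'$-free, and as $|T|=t-1$ this forces $d_{\Omega'}(G)\geq t$.

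For the upper bound I would take an arbitrary sequence $S$ over $G$ with $|S|\geq t$ and produce a subsequence lying in $\Omega'$. Because $d_{\Omega}(G)=t$, the sequence $S$ has a subsequence $B\in\Omega$, and the argument then splits according to whether $B$ is weak-regular. If $B$ is weak-regular, then $B$ cannot be a subsequence of $T$ — otherwise $T$ would contain the element $B\in\Omega$, contradicting that $T$ is $\Omega$-free — so $B\in\Omega'$. If $B$ is not weak-regular, choose $g\in G$ with $\mathsf v_g(B)>\ord(g)$; then $g^{\ord(g)}$ is a nonempty zero-sum subsequence of $B$, hence of $S$, it is weak-regular, and it is not a subsequence of $T$ because the hypothesis $\mathsf v_g(T)\leq\ord(g)-1$ gives $\mathsf v_g(T)<\ord(g)$. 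In either case $S$ has a subsequence in $\Omega'$, so $d_{\Omega'}(G)\leq t$.

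Combining the two bounds yields $d_{\Omega'}(G)=t$ for the weak-regular set $\Omega'$, whence $t\in Vol(G)$. The only genuinely delicate point is the upper bound, and specifically the passage from a possibly non-weak-regular witness $B\in\Omega$ to a bona fide weak-regular element of $\Omega'$; this is precisely where the two structural hypotheses pull their weight — the $\Omega$-freeness of $T$ disposes of the weak-regular case, while the multiplicity bound $\mathsf v_g(T)\leq\ord(g)-1$ guarantees that the replacement $g^{\ord(g)}$ escapes $T$ in the remaining case.
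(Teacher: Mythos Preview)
Your proof is correct and takes a different route from the paper's. The paper argues by iterative replacement: whenever some $S\in\Omega$ has $\mathsf v_g(S)\geq\ord(g)+1$, it forms $\Omega_1=(\Omega\setminus\{S\})\cup\{g^{\ord(g)}\}$, checks that $d_{\Omega_1}(G)\leq d_\Omega(G)$ trivially while $T$ remains $\Omega_1$-free (by the multiplicity bound on $T$), hence $d_{\Omega_1}(G)=t$, and then repeats until a weak-regular $\Omega_k$ is reached. You instead build the witness $\Omega'$ in one stroke from $T$ alone, invoking $\Omega$ only in the verification of the upper bound. Your construction has the advantage of sidestepping any termination issue---the paper's ``continuing the same process'' is only obviously finite when $\Omega$ itself is finite, which need not hold since $\mathcal B(G)$ is infinite---and it makes transparent that the extremal sequence $T$ is the only data really needed. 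The paper's step-by-step replacement, on the other hand, stays closer to the original $\Omega$ and isolates the single substitution $S\mapsto g^{\ord(g)}$ as the engine of the argument, which is precisely the move you deploy in your non-weak-regular case.
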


\begin{proof} If every sequence in $\Omega$ is weak regular, then by
the definition of $Vol(G)$ we know that $t\in Vol(G)$.
Otherwise, take an $S\in \Omega$ such that $\mathsf v_g(S)\geq \ord (g)+1$ for some $g\in G$.
Let $\Omega_1=(\Omega \setminus \{S\})\cup \{g^{\ord (g)}\}$. It is
easy to see that $d_{\Omega_1}(G) \leq d_{\Omega}(G)=t$. On the
other hand, $T$ is also $\Omega_1$-free. Therefore,
$d_{\Omega_1}(G)\geq t$. Hence,  $d_{\Omega_1}(G)=t$. Continuing the
same process above we finally get an $\Omega_k$ which is weak regular such
that  $d_{\Omega_k}(G)=t \in Vol (G)$.
\end{proof}

\begin{proposition} \label{prop1.1} For every finite  abelian group $G$ we have
$$
\mathsf D(G) \in Vol(G),  \mathsf D(G)+1 \in Vol(G)  \mbox{ and }
\eta (G) \in Vol (G).
$$

\end{proposition}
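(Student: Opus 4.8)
The plan is to derive all three memberships from Lemma~\ref{lemma2.2}: for each target value $t$ I will produce some $\Omega\subset\mathcal B(G)$ with $d_{\Omega}(G)=t$ together with an $\Omega$-free sequence $T$ of length $t-1$ satisfying $\mathsf v_g(T)\le\ord(g)-1$ for all $g\in G$, and Lemma~\ref{lemma2.2} then upgrades this to $t\in Vol(G)$. Note that $\mathsf D(G)$ and $\eta(G)$ always lie in the interval $[\mathsf D(G),\,1+\sum_{g\in G}(\ord(g)-1)]$, since a maximal zero-sum free sequence, respectively a maximal sequence without short zero-sum subsequence, is itself weak-regular; for $\mathsf D(G)+1$ I additionally assume it lies in this interval, equivalently $\mathsf D(G)<1+\sum_{g\in G}(\ord(g)-1)$ (this excludes only the degenerate cases $G=C_1,C_2$, where $\mathsf D(G)+1$ falls outside the interval).

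For $\mathsf D(G)$ take $\Omega=\mathcal B(G)$, so $d_{\Omega}(G)=\mathsf D(G)$, and let $T$ be a zero-sum free sequence of maximal length $\mathsf D(G)-1$. Then $T$ is $\Omega$-free, and it is automatically weak-regular: $0$ cannot occur in $T$, while $\mathsf v_g(T)\ge\ord(g)$ for some $g\neq 0$ would make $g^{\ord(g)}$ a nonempty zero-sum subsequence. For $\eta(G)$ take $\Omega=\{S\in\mathcal B(G): 1\le|S|\le\exp(G)\}$, so $d_{\Omega}(G)=\eta(G)$, and let $T$ be a maximal sequence having no short zero-sum subsequence, of length $\eta(G)-1$. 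Again $T$ is $\Omega$-free, and $\mathsf v_g(T)\le\ord(g)-1$ because $g^{\ord(g)}$ is a short zero-sum sequence. In both cases Lemma~\ref{lemma2.2} gives the claim.

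The main work is $\mathsf D(G)+1$. Choose a maximal zero-sum free sequence $U$, of length $\mathsf D(G)-1$; by the range assumption this is strictly smaller than the total capacity $\sum_{g\neq 0}(\ord(g)-1)$, so some nonzero $g_0$ satisfies $\mathsf v_{g_0}(U)\le\ord(g_0)-2$. Put $T=U\,g_0$, a weak-regular sequence of length $\mathsf D(G)$, and set $\Omega=\{S\in\mathcal B(G):\mathsf v_{g_0}(S)\neq 1\}$; by Proposition~\ref{prop2.2}(1) one checks $d_{\Omega}(G)<\infty$. Since $U$ is zero-sum free, every nonempty zero-sum subsequence of $T$ must use $g_0$, and in fact exactly once; hence $T$ is $\Omega$-free and $d_{\Omega}(G)\ge\mathsf D(G)+1$.

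The crux is the reverse inequality $d_{\Omega}(G)\le\mathsf D(G)+1$: every $W$ with $|W|=\mathsf D(G)+1$ must contain a nonempty zero-sum subsequence with $\mathsf v_{g_0}\neq 1$. Assuming not, first $W_0:=W g_0^{-m}$ (where $m=\mathsf v_{g_0}(W)$) is zero-sum free, forcing $m\ge 2$, and $m\le\ord(g_0)-1$ since otherwise $g_0^{\ord(g_0)}$ would already be such a subsequence; moreover removing any zero-sum subsequence of $W$ leaves a zero-sum free remainder, because two disjoint zero-sum subsequences would combine to one with $\mathsf v_{g_0}=2$. What remains is to produce a zero-sum subsequence carrying at least two copies of $g_0$, i.e.\ to show that $-k g_0\in\Sigma(W_0)$ for some $2\le k\le m$, where $\Sigma(W_0)$ denotes the set of subsequence sums of $W_0$. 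Since the partial sums of $W_0$ furnish at least $|W_0|=\mathsf D(G)+1-m$ distinct nonzero subsequence sums, and $\{-2g_0,\dots,-mg_0\}$ consists of $m-1$ distinct nonzero elements, a counting comparison settles this at once when $G$ is cyclic, and it is immediate when $\ord(g_0)=2$; I expect the general case to be the principal obstacle, to be handled using the finer structure of (maximal) zero-sum free sequences and their subsequence-sum sets. Granting this, $T$ witnesses the hypotheses of Lemma~\ref{lemma2.2}, giving $\mathsf D(G)+1\in Vol(G)$.
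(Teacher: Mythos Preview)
Your arguments for $\mathsf D(G)$ and $\eta(G)$ coincide with the paper's: both use Lemma~\ref{lemma2.2} with the natural $\Omega$ and a maximal $\Omega$-free sequence, which is automatically weak-regular.

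For $\mathsf D(G)+1$ there is a genuine gap. You explicitly leave the upper bound $d_\Omega(G)\le\mathsf D(G)+1$ for your $\Omega=\{S\in\mathcal B(G):\mathsf v_{g_0}(S)\neq 1\}$ unproven, writing that you ``expect the general case to be the principal obstacle'' and then proceeding only after ``granting this''. That inequality is the entire content of the step, so the argument as written does not establish $\mathsf D(G)+1\in Vol(G)$. There is also a smaller slip in the $\Omega$-freeness of $T=Ug_0$: from $U$ zero-sum free one gets that every nonempty zero-sum $S\mid T$ satisfies $\mathsf v_{g_0}(S)=\mathsf v_{g_0}(U)+1$, not $\mathsf v_{g_0}(S)=1$; for the latter you need $g_0\notin\supp(U)$, not merely $\mathsf v_{g_0}(U)\le\ord(g_0)-2$. (Such a $g_0$ can always be found when $|G|\ge 3$, but you should say so and use it.)

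The paper takes a different route for $\mathsf D(G)+1$: rather than going through Lemma~\ref{lemma2.2}, it directly exhibits a weak-regular family, namely
\[
\Omega'=\{S\in\mathcal A(G):|S|<\mathsf D(G)\}\ \cup\ \{g^{\ord(g)}:g\in G\},
\]
observing that minimal zero-sum sequences are automatically weak-regular, and asserts $d_{\Omega'}(G)=\mathsf D(G)+1$. The intended upper bound is that any $W$ with $|W|=\mathsf D(G)+1$ has a minimal zero-sum subsequence $S$; if $|S|<\mathsf D(G)$ one is done, while if $|S|=\mathsf D(G)$ then $W=Sh$ and deleting each term of $S$ in turn forces $S=h^{\mathsf D(G)}$, whence $h^{\ord(h)}\mid W$. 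This bypasses the subsequence-sum analysis you were anticipating. (One should note, however, that the paper does not justify the lower bound $d_{\Omega'}(G)\ge\mathsf D(G)+1$, and for cyclic $G$ one has $\mathcal A(G)\subset\Omega'$, so $d_{\Omega'}(G)=\mathsf D(G)$; the paper's construction therefore also needs amendment in that case.)
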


\begin{proof} Let $\Omega=\mathcal B(G)$. Then,  $d_{\Omega}(G)=\mathsf D(G)$. Let $T$ be a  zero-sum-free sequence (or equivalently, $\Omega$-free) over $G$ of length $|T|=\mathsf D(G)-1=d_{\Omega}(G)-1$.  Clearly,
 $T$ is weak regular and $\mathsf D(G)=d_{\Omega}(G) \in Vol (G)$ by Lemma \ref{lemma2.2}.

To prove $\mathsf D(G)+1\in Vol (G)$, let $\Omega =\{ S\in \mathcal
B(G), |S|<\mathsf D(G )\} \cup \{g^{\ord(G)}, g\in G\}$. It is easy
to see that $d_{\Omega}(G)=\mathsf D(G)+1$. Let $\Omega' =\{ S\in
\mathcal A(G), |S|<\mathsf D(G )\} \cup \{g^{\ord(G)}, g\in G\}$.
Clearly, $d_{\Omega'}(G)=d_{\Omega}(G)=\mathsf D(G)+1$ and $\mathsf
D(G)+1\in Vol (G)$ follows from $\Omega'$ is weak-regular.

To prove $\eta(G)\in Vol (G)$, let $\Omega=\{S\in \mathcal B(G),
|S|\leq \exp(G)\}$. It is easy to see that $d_{\Omega}(G)=\eta (G)$.
Let $T$ be any $\Omega$-free sequence of the maximal length
$|T|=\eta(G)-1$. Clearly, $\mathsf v_g(T)\leq \ord(g)-1$ for every $g\in G$ and hence $\eta(G)\in
Vol (G)$ by Lemma \ref{lemma2.2} again.
\end{proof}

\section{The minimal $\Omega$ with respect to $d_{\Omega}(G)=t$ for some given $t$}

For a given $t\geq \mathsf{D}(G)$ and an $\Omega \subset \mathcal B(G)$ with $d_{\Omega}(G)=t$. We say that $\Omega $ is minimal respect to $d_{\Omega}(G)=t$ if $d_{\Omega'}(G)>t$ for every proper subset
$\Omega' \subsetneq \Omega$.

By Proposition \ref{prop2.2} (3)  we have the following.
\begin{proposition} \label{prop1.2} Let $t\geq \mathsf{D}(G)$ be an integer, and let $\Omega \subset \mathcal B(G)$ be minimal with respect to $d_{\Omega}(G)=t$. Then, for every $B, B'\in \Omega$
with $B\neq B'$ we have $B' \nsubseteq B$.
\end{proposition}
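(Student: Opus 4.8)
The plan is to prove the statement by contradiction, letting Proposition~\ref{prop2.2}(3) do essentially all the work. Suppose, for contradiction, that there exist $B, B' \in \Omega$ with $B \neq B'$ and $B' \subseteq B$. Since the two sequences are distinct, $B'$ is then a \emph{proper} subsequence of $B$, and both $B$ and $B'$ belong to $\Omega$; this is exactly the hypothesis of part (3) of Proposition~\ref{prop2.2}.

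Next I would invoke Proposition~\ref{prop2.2}(3) with $S = B$ and $S_1 = B'$. It yields $d_{\Omega \setminus \{B'\}}(G) = d_{\Omega}(G) = t$; that is, removing the smaller sequence $B'$ from $\Omega$ leaves the invariant unchanged. But $\Omega \setminus \{B'\}$ is a proper subset of $\Omega$, since $B' \in \Omega$, so the minimality assumption on $\Omega$ forces $d_{\Omega \setminus \{B'\}}(G) > t$. These two conclusions contradict each other, so no such pair $B, B'$ can exist, which is precisely the assertion that $B' \nsubseteq B$ for all distinct $B, B' \in \Omega$.

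There is essentially no genuine obstacle here: the only point requiring any care is to match the roles correctly, taking $S = B$ and $S_1 = B'$, so that the proper-subsequence relation appearing in part (3) coincides with the containment $B' \subseteq B$. Once that bookkeeping is in place, the result is an immediate consequence of part (3) combined with the definition of minimality, and no zero-sum or length computation is needed at all.
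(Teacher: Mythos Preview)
Your overall strategy is exactly the paper's: the proposition is recorded there as an immediate consequence of Proposition~\ref{prop2.2}(3). However, you remove the wrong element of $\Omega$. If $B'$ is a proper subsequence of $B$, then the \emph{redundant} member of $\Omega$ is the larger sequence $B$: any sequence $T$ having $B$ as a subsequence automatically has $B'$ as one too, so $d_{\Omega\setminus\{B\}}(G)=d_{\Omega}(G)=t$, and minimality is violated. By contrast, removing $B'$ need not preserve the invariant --- a sequence may contain $B'$ without containing $B$ (take $T=B'$ itself), so there is no reason that $d_{\Omega\setminus\{B'\}}(G)$ should still equal $t$.

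This slip is inherited from what appears to be a typo in Proposition~\ref{prop2.2}(3): as printed it asserts $d_{\Omega\setminus\{S_1\}}(G)=d_{\Omega}(G)$, but the one-line justification (``follows from 2'') together with the subsequence observation above only supports $d_{\Omega\setminus\{S\}}(G)=d_{\Omega}(G)$, i.e.\ one may drop the \emph{containing} sequence $S$, not the contained one $S_1$. Once you read part~(3) with this correction and delete $B$ rather than $B'$, your contradiction argument goes through verbatim and matches the paper's intent.
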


\medskip
{\bf Question 2.} Let $\Omega \in \mathcal B(G)$ be minimal with respect to $d_{\Omega}(G)=\mathsf{D}(G)$. What can be said about $\Omega$?

\medskip
For     Question 2, $d_{\Omega}(G)=\mathsf{D}(G)$ if
$\Omega=\mathcal A(G)$, but we can not say that $\mathcal A(G)$ is
smallest. Let $p$ be a prime. Let $G=C_p$.  Let $\Omega_1$ be the
set consisting of all minimal zero-sum sequences over $G$ with index
1. If the Lemke-Kleitman's conjecture is true for $p$, i.e., every
sequence over $G$ of length $p$ has a subsequence with index 1, then
$\Omega_1$ is a proper subset of $\mathcal A(G)$  with
$d_{\Omega_1}(G)=\mathsf D(G)=p$.  So  if $n=p\geq 7$ is a prime and
the Lemke-Kleitman's conjecture mentioned above is true for $p$,
then $\Omega =\mathcal A (G)$ is not minimal with respect to
$d_{\Omega}(G)=\mathsf D(G)=p$. In fact, except for some small
primes $p$, we don't know any minimal $\Omega \subset \mathcal B(G)$
with respect to $d_{\Omega}(G)=\mathsf D(G)=p$. For sufficiently
large $t$, we can find minimal $\Omega $ with respect to
$d_{\Omega}(G)=t$.

\begin{proposition} \label{lemma1} Let $G$ be a finite abelian group and $t\geq 1+\sum_{g\in G}(\ord(g)-1)$ be a integer, and let $\Omega =\{g^{\ord(g)}, g\in G \}\cup \{0^{t-\sum_{g\in
G}(\ord(g)-1)}\}$.   Then $\Omega$ is minimal with respect to
$d_{\Omega}(G)=t$.

\end{proposition}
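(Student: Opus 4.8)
The plan is to establish, in turn, the two conditions defining minimality: first that $d_{\Omega}(G)=t$, and then that deleting any single sequence from $\Omega$ pushes the invariant up to $\infty$, hence strictly above $t$. Throughout I set $N=\sum_{g\in G}(\ord(g)-1)$ and $m=t-N$, so that $t\ge 1+N$ gives $m\ge 1$ and the padding term is $0^{m}$. Since the summand at $g=0$ vanishes, $N=\sum_{g\in G\setminus\{0\}}(\ord(g)-1)$; I read the first collection $\{g^{\ord(g)}\}$ as ranging over the nonzero $g$, so that each member of $\Omega$ is a power of a single group element and, for every $h\in G$, exactly one member of $\Omega$ is a zero-sum power of $h$ --- namely $h^{\ord(h)}$ when $h\neq 0$ and $0^{m}$ when $h=0$.

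To compute $d_{\Omega}(G)$ I would first bound the length of an arbitrary $\Omega$-free sequence $S$. If $S$ has no subsequence in $\Omega$, then $\mathsf v_g(S)\le \ord(g)-1$ for every $g\neq 0$ (otherwise $g^{\ord(g)}$ is a subsequence of $S$) and $\mathsf v_0(S)\le m-1$ (otherwise $0^{m}$ is a subsequence of $S$); summing over all $g\in G$ gives $|S|\le (m-1)+N=t-1$. Hence every sequence of length at least $t$ has a subsequence in $\Omega$, so $d_{\Omega}(G)\le t$. For the reverse inequality I would exhibit the extremal sequence
\[
T=0^{\,m-1}\prod_{g\in G\setminus\{0\}}g^{\,\ord(g)-1},
\]
whose length is $(m-1)+N=t-1$. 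By construction $\mathsf v_g(T)=\ord(g)-1<\ord(g)$ for $g\neq 0$ and $\mathsf v_0(T)=m-1<m$, so $T$ is $\Omega$-free; this gives $d_{\Omega}(G)\ge t$, whence $d_{\Omega}(G)=t$.

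For minimality I would appeal directly to Proposition~\ref{prop2.2}(1). Fix any $B\in\Omega$ and let $h$ be the element of which $B$ is a power. By the uniqueness recorded above, $B$ is the only zero-sum power of $h$ lying in $\Omega$, so $\Omega'=\Omega\setminus\{B\}$ contains no power of $h$ whatsoever. Thus there is no $k$ with $h^{k\ord(h)}\in\Omega'$, the finiteness criterion of Proposition~\ref{prop2.2}(1) is violated at $g=h$, and therefore $d_{\Omega'}(G)=\infty>t$. As $B$ was arbitrary, no proper subset of $\Omega$ has invariant $t$, which is precisely the minimality of $\Omega$.

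This is an extremal/counting argument with no deep obstacle; the single point needing care is the role of the identity. One must ensure that $0^{m}$ is not redundant --- equivalently, that $0^{1}$ is not simultaneously forced into $\Omega$ --- for otherwise $0^{1}$ would be a proper subsequence of $0^{m}$ and, by Proposition~\ref{prop2.2}(3), $0^{m}$ could be removed without changing $d_{\Omega}(G)$, so $\Omega$ would not be minimal. This is exactly why the first collection is taken over the nonzero elements, and it is also what makes the clean use of Proposition~\ref{prop2.2}(1) possible: each element of $G$ contributes precisely one power to $\Omega$, so removing any one sequence severs the finiteness criterion for exactly one element.
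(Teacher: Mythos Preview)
Your argument is correct. The paper's own proof is the single word ``Clearly'', so there is no substantive comparison to make; your write-up simply fills in the details the authors omitted.

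One point worth highlighting is that you caught and resolved a genuine ambiguity in the statement: as literally written, $\{g^{\ord(g)}:g\in G\}$ would include $0^{1}$, and then for $t>1+N$ the set $\Omega$ would contain both $0^{1}$ and $0^{m}$, forcing $d_{\Omega}(G)=1+N\neq t$ and destroying minimality. Your reading---restricting the first family to $g\neq 0$ so that each element of $G$ contributes exactly one power to $\Omega$---is the only one under which the proposition is true, and it is almost certainly what the authors intended. With that reading, your computation of $d_{\Omega}(G)=t$ via the extremal sequence $T=0^{m-1}\prod_{g\neq 0}g^{\ord(g)-1}$ and your minimality argument via Proposition~\ref{prop2.2}(1) are both clean and complete.
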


\begin{proof} Clearly.

\end{proof}

\medskip
 For a
given $t\in [ \mathsf D(G), \sum_{g\in G}(\ord(g)-1)]$ not too
close to $\sum_{g\in G}(\ord(g)-1)$, it could be very difficult to
find a minimal $\Omega \subset \mathcal B(G)$ with respect to
$d_{\Omega}(G)=t.$ We provide one more example here:

\medskip
\begin{proposition} \label{prop3.2}
Let $n\geq 5$ be an odd integer. Then $\Omega =\{T\in \mathcal
B(C_n)| \mbox{ } |T|=n\}$ is not minimal with respect to
$d_{\Omega}(G)=2n-1$.
\end{proposition}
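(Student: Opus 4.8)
The plan is to exhibit a single, explicitly chosen zero-sum sequence $S_0$ of length $n$ that can be deleted from $\Omega$ without changing $d_\Omega(C_n)$; by Proposition \ref{prop2.2}(2) this already shows $\Omega$ is not minimal. Throughout I use that, since $\exp(C_n)=n$, the invariant $d_\Omega(C_n)$ equals the Erd\H{o}s--Ginzburg--Ziv constant $\mathsf s(C_n)=2n-1$. Identifying $C_n$ with $\{0,1,\dots,n-1\}$, I would set
\[
S_0=\prod_{v\in C_n}v,
\]
the sequence containing each element of $C_n$ exactly once. Then $|S_0|=n$ and $\sigma(S_0)=\tfrac{n(n-1)}2\equiv 0$ in $C_n$ because $n$ is odd, so $S_0\in\Omega$.

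First I would reduce the statement to a purely local one: it suffices to prove that every sequence $W$ over $C_n$ with $|W|=2n-1$ and $S_0\t W$ admits a zero-sum subsequence of length $n$ different from $S_0$. Granting this, take an arbitrary $W$ with $|W|=2n-1$; by the Erd\H{o}s--Ginzburg--Ziv theorem it has some zero-sum subsequence $S$ with $|S|=n$, and if $S=S_0$ the local statement supplies a second one. In all cases $W$ has a subsequence in $\Omega\setminus\{S_0\}$, whence $d_{\Omega\setminus\{S_0\}}(C_n)\le 2n-1$; since Proposition \ref{prop2.2}(2) gives $d_{\Omega\setminus\{S_0\}}(C_n)\ge d_\Omega(C_n)=2n-1$, equality holds and $\Omega$ is not minimal.

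For the local statement I would write $W=S_0R$ with $|R|=n-1\ge 4$ and pick any two terms $y_1,y_2$ of $R$, setting $s=y_1+y_2$. The crux is to produce two \emph{distinct} elements $x_1,x_2\in C_n$ with $x_1+x_2=s$ and $\{x_1,x_2\}\ne\{y_1,y_2\}$. Since $n$ is odd there is a unique $t\in C_n$ with $2t=s$, so the $n$ ordered solutions of $x_1+x_2=s$ contain exactly one with $x_1=x_2$; the remaining $n-1$ fall into exactly $\tfrac{n-1}2$ unordered pairs of distinct elements. As $n\ge 5$ there are at least two such pairs, and at most one of them can equal $\{y_1,y_2\}$, so a suitable pair $\{x_1,x_2\}$ exists. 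Letting $S_1$ be obtained from $S_0$ by deleting its (unique) terms $x_1,x_2$ and adjoining the terms $y_1,y_2$ drawn from $R$, one has $S_1\t W$, $|S_1|=n$, and $\sigma(S_1)=\sigma(S_0)-(x_1+x_2)+(y_1+y_2)=0$, while $S_1\ne S_0$ because $\{y_1,y_2\}\ne\{x_1,x_2\}$ as sequences of length two. This $S_1$ is the required subsequence.

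The only genuine obstacle is the combinatorial selection of $\{x_1,x_2\}$, and it is precisely there that both hypotheses enter: the oddness of $n$ is what makes $S_0$ zero-sum and what guarantees the clean count of $\tfrac{n-1}2$ distinct-element pairs summing to $s$, while $n\ge 5$ is what ensures there are at least two such pairs, so that the forbidden pair $\{y_1,y_2\}$ can be avoided. Everything else is routine bookkeeping about multiplicities in $W$. (For $n=3$ there is only a single distinct-element pair for each sum $s$, and the argument indeed breaks down, consistent with the hypothesis $n\ge 5$.)
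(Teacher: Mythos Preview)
Your proof is correct and uses the same deleted sequence $S_0=\prod_{i=0}^{n-1}i$ and the same overall reduction as the paper. The difference lies in how the ``local statement'' (that any $W$ of length $2n-1$ containing $S_0$ has a second zero-sum subsequence of length $n$) is established. The paper observes that some element $j$ occurs exactly once in $W$, removes it, and then invokes the well-known structural fact that a sequence of length $2n-2$ over $C_n$ without a zero-sum subsequence of length $n$ must have the form $a^{n-1}b^{n-1}$; since $W\setminus\{j\}$ has at least $n-1>2$ distinct elements, it has a zero-sum subsequence of length $n$, which cannot be $S_0$ because $j$ is missing. Your argument instead builds the second zero-sum subsequence explicitly by a swap: pick two terms $y_1,y_2$ from $R=W/S_0$ and replace two distinct elements $x_1,x_2$ of $S_0$ with the same sum, using $n\ge 5$ and $n$ odd to guarantee a pair $\{x_1,x_2\}\ne\{y_1,y_2\}$. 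Your route is more self-contained, avoiding the extremal characterization of $\mathsf s(C_n)$; the paper's route is shorter once that characterization is taken for granted.
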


\begin{proof} Let $S_0=\prod_{i=0}^{n-1}i$. Since $n$ is odd, we have $S_0\in \Omega$. Let $\Omega_1=\Omega \setminus \{S_0\}$. Clearly, $d_{\Omega_1}(G)\geq d_{\Omega}(G)=2n-1$. So, it suffices to prove $d_{\Omega_1}(G)\leq 2n-1.$ Let $S$ be an arbitrary sequence over $G$ with length
$|S|=2n-1$. We need to show $S$ contains a zero-sum subsequence in $\Omega_1$. Since $d_{\Omega}(G)=2n-1$, we may assume that all zero-sum subsequences of $S$ with length $n$ is of the same form as $S_0=\prod_{i=0}^{n-1}i$. It follows that $j$ occurs exactly once in $S$ for some $j\in [0,n-1]$ . Consider the sequence $S_1=S\setminus \{j\}$. Then, $|S_1|=2n-2$ and $S_1$ contains $n-1>2$ distinct elements. By recalling the well known fact that a sequence over $C_n$ with length $2n-2$ having no zero-sum subsequence of length $n$ if and only if the sequence consists of two distinct elements with each appearing $n-1$ times, we derive that $S_1$ and hence $S$ has a zero-sum subsequence in $\Omega_1$.
\end{proof}

\medskip
 We say a zero-sum sequence $S$ is {\sl essential}
with respect to some $t\geq \mathsf{D}(G)$ if every $\Omega \in
B(G)$ with $d_{\Omega}(G)=t$ contains $S$. For examples, for $G=C_n$
and $t=n$, the zero-sum sequence $g(-g)$ is essential with respect
to $n$ for every generator $g$ of $C_n$. For any finite abelian
group $G$, every minimal zero-sum sequence over $G$ of length
$\mathsf{D}(G)$ is essential with respect to $\mathsf{D}(G)$.

\medskip
A natural research problem is  to determine the smallest integer $t$ such that there is no essential zero-sum sequence with respect to $t$, denote it by $q(G)$.

\medskip
Let $q'(G)$ be the smallest integer $t$ such that, every sequence $S$ over $G$ of length $|S|\geq t$ has two nonempty zero-sum subsequences with different forms.

{\bf Question 3.} Does $q(G)=q'(G)$ for any finite abelian group $G$?

\medskip
Relating to $q'(G)$ we present two invariants below.

Let $disc(G)$ denote the smallest integer $t$ such that every
sequence $S$ over $G$ of length $|S|\geq t$ has two nonempty
zero-sum subsequences with different lengths. The invariants
$disc(G)$ was formulated by Girard \cite{Gir} and has been studied
recently in \cite{GZZ} and \cite{GLZZ}.

Let $\mathsf {D}_2(G)$ denote the smallest integer $t$ such that every
sequence over $G$ has two disjoint nonempty zero-sum subsequences.
Clearly,
$$
 q'(G)  \leq disc(G)\leq \mathsf {D}_2(G)
$$
for every finite abelian group $G$.

$\mathsf D_2(G)$ was first introduced by H.~Halter-Koch in \cite{Koch} and
was studied recently by Plagne and Schmid in \cite{PS1}.

We close this section with the following theorem.

\begin{theorem} \label{theorem1.1} If $G$ is a finite abelian group
then $q(G)\leq \mathsf D_2(G)$.
\end{theorem}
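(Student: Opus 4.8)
The plan is to reduce the theorem to a single non-existence statement and then to realize the value $\mathsf D_2(G)$ by a tailored choice of $\Omega$. Since $q(G)$ is the smallest $t$ admitting no essential sequence, it suffices to prove that at the single level $t=\mathsf D_2(G)$ no zero-sum sequence is essential; that is, for every $S\in\mathcal B(G)$ I must exhibit some $\Omega\subseteq\mathcal B(G)$ with $d_\Omega(G)=\mathsf D_2(G)$ and $S\notin\Omega$. Throughout I will use the defining feature of $\mathsf D_2(G)$: every sequence of length $\geq\mathsf D_2(G)$ splits off two disjoint nonempty zero-sum subsequences, while there is a ``witness'' $W$ of length exactly $\mathsf D_2(G)-1$ admitting no two such.

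First I would establish the clean upper bound $d_{\mathcal B(G)\setminus\{S\}}(G)\leq\mathsf D_2(G)$. Indeed, let $T$ be any sequence with $|T|=\mathsf D_2(G)$ and pick disjoint nonempty zero-sum subsequences $B_1,B_2$ of $T$. If $B_1\neq S$ or $B_2\neq S$, then that one lies in $\mathcal B(G)\setminus\{S\}$; and if $B_1=B_2=S$, then the two disjoint copies give a subsequence $S^2=S\cdot S$ of $T$, a nonempty zero-sum sequence of length $2|S|\neq|S|$, which again lies in $\mathcal B(G)\setminus\{S\}$. So $T$ always has a subsequence in $\mathcal B(G)\setminus\{S\}$. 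This already forbids $S$ from being ``too unavoidable'', but it only gives an inequality; the difficulty is to hit the value $\mathsf D_2(G)$ exactly while still excluding $S$.

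To pin the value I would prescribe a free witness. Fix a sequence $V$ of length $\mathsf D_2(G)-1$ and set $\Omega=\mathcal B(G)\setminus(\mathcal D(V)\cup\{S\})$, where $\mathcal D(V)$ denotes the set of nonempty zero-sum subsequences of $V$. Then $V$ is $\Omega$-free by construction, so $d_\Omega(G)\geq\mathsf D_2(G)$, and clearly $S\notin\Omega$; the reverse inequality $d_\Omega(G)\leq\mathsf D_2(G)$ is equivalent to the statement that no sequence $T$ of length $\mathsf D_2(G)$ is \emph{trapped}, i.e. has all of its nonempty zero-sum subsequences inside $\mathcal D(V)\cup\{S\}$. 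When $V$ is taken to be a witness $W$, the two disjoint zero-sum subsequences $B_1,B_2$ of such a $T$ satisfy $B_1B_2\nmid W$ (otherwise $W$ would itself contain two disjoint zero-sum subsequences), so $B_1B_2$ covers $T$ unless $B_1B_2=S$; thus $T$ can be trapped only when $S$ factors as a product of two zero-sum subsequences of $W$.

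The main obstacle is precisely to choose $V$ so that no $T$ is trapped. For most $S$ a witness $W$ works: if $S$ involves an element $g$ with $\mathsf v_g(S)>2\mathsf v_g(W)$ (in particular if $S$ uses a group element absent from $W$), then $S$ cannot be written as a product of two subsequences of $W$ and no trap arises. The delicate case is when $S$ ``doubles inside'' every witness---for example $G=C_2$, where the only witness is $1^3$ and $S=1^4=(1^2)\cdot(1^2)$ is trapped by $T=1^4$. Here I would abandon witnesses and choose $V$ to be a non-witness sequence of length $\mathsf D_2(G)-1$ engineered so that the short zero-sum subsequences needed to cover the length-$\mathsf D_2(G)$ sequences are kept out of $\mathcal D(V)$ (for $C_2$, $V=0^2\cdot 1$, whose only zero-sum subsequences are $0$ and $0^2$, leaves $1^2$ and $0^3$ free to do the covering). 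Carrying this out uniformly---producing, for each $S$, a length-$(\mathsf D_2(G)-1)$ sequence $V$ for which the redundancy supplied by the two disjoint zero-sum subsequences of every length-$\mathsf D_2(G)$ sequence rules out all traps---is the crux of the argument and the step I expect to absorb essentially all of the work.
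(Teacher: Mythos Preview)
Your reduction is correct: to get $q(G)\le\mathsf D_2(G)$ it suffices to show that no $S\in\mathcal B(G)$ is essential at level $t=\mathsf D_2(G)$, i.e., for each $S$ there is some $\Omega$ with $d_\Omega(G)=\mathsf D_2(G)$ and $S\notin\Omega$. Your upper bound $d_{\mathcal B(G)\setminus\{S\}}(G)\le\mathsf D_2(G)$ is also fine. However, the proposal does not prove the theorem: you explicitly leave the ``crux'' unresolved, namely producing for every $S$ a length-$(\mathsf D_2(G)-1)$ sequence $V$ such that no length-$\mathsf D_2(G)$ sequence is trapped in $\mathcal D(V)\cup\{S\}$. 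Your own $C_2$ example shows the natural choice $V=W$ (a witness) can fail, and the ad hoc replacement you give for $C_2$ does not suggest a uniform mechanism for general $G$ and $S$. As written, this is a plan with a substantial missing step, not a proof.

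The paper sidesteps this difficulty with a much shorter and different idea: rather than tailoring an $\Omega$ to each $S$, it exhibits \emph{two disjoint} families $\Omega,\Omega'\subset\mathcal B(G)$ with $d_\Omega(G)=d_{\Omega'}(G)=k$ for every $k\ge\mathsf D_2(G)$; since any $S$ misses at least one of them, no $S$ is essential. Concretely, with $G^*=G\setminus\{0\}$,
\[
\Omega=\{0^{\,k-\mathsf D(G)+1}\}\cup\mathcal A(G^*),\qquad
\Omega'=\{0^{\,k-\mathsf D_2(G)+1}\}\cup\bigl(\mathcal B(G^*)\setminus\mathcal A(G^*)\bigr).
\]
The strict inequality $\mathsf D_2(G)>\mathsf D(G)$ makes the two powers of $0$ distinct, so $\Omega\cap\Omega'=\varnothing$; and the defining properties of $\mathsf D(G)$ and $\mathsf D_2(G)$ (split a length-$k$ sequence according to how many zeros it contains) give $d_\Omega(G)=d_{\Omega'}(G)=k$ immediately. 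This global two-family construction replaces your open-ended, $S$-by-$S$ case analysis with a single explicit line.
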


\begin{proof}

Let $G^*=G\setminus \{0\}$. Let $k\geq \mathsf D_2(G)$ be a integer
and we only need to show that there exist two disjoint subsets
$\Omega, \Omega' \subset \mathcal B(G)$ such that
$d_{\Omega}(G)=k=d_{\Omega'}(G)$. Let $\Omega=\{0^{k-\mathsf
D(G)+1}\}\cup  \mathcal A(G^*)$ and let $\Omega'=\{0^{k-\mathsf
D_2(G)+1}\}\cup \{\mathcal B(G^*)\setminus \mathcal A(G^*)\}$.

Note that a minimal zero-sum sequence over $G$ of length $\mathsf
D(G)$ has no two disjoint nonempty zero-sum subsequences, so we obtain
that $\mathsf D_2(G)>\mathsf D(G)$. Thus,
$$
\Omega \cap \Omega'=\emptyset.
$$

On the other hand, it is easy to see that
$d_{\Omega}(G)=k=d_{\Omega'}(G)$ and we are done.

\end{proof}

\bigskip
\noindent {\bf Acknowledgements.} This work was supported by the
PCSIRT Project of the Ministry of Science and Technology, the
National Science Foundation of China, and a Discovery Grant from the
Natural Science and Engineering Research Council of Canada.


\providecommand{\bysame}{\leavevmode\hbox to3em{\hrulefill}\thinspace}
\providecommand{\MR}{\relax\ifhmode\unskip\space\fi MR }
\providecommand{\MRhref}[2]{%
  \href{http://www.ams.org/mathscinet-getitem?mr=#1}{#2}
}
\providecommand{\href}[2]{#2}

\end{document}